\newcommand {\stirlingf}[2]{\genfrac[]{0pt}{}{#1}{#2}}
\newcommand {\stirlings}[2]{\genfrac\{\}{0pt}{}{#1}{#2}}
\newcommand {\lah}[2]{\genfrac\lfloor \rfloor{0pt}{}{#1}{#2}}
\newtheorem{theorem}{Theorem}
\newenvironment{proof}[1][Proof]{\noindent\textbf{#1.} }{\ \rule{0.5em}{0.5em}}
\begin{document}

\title{Combinatorial approach of certain generalized Stirling numbers}
\author{Hac\`{e}ne Belbachir, Amine Belkhir and Imad Eddine Bousbaa  \\
USTHB, Faculty of Mathematics\\
RECITS Laboratory, DG-RSDT\\
BP 32, El Alia, 16111, Bab Ezzouar, Algiers, Algeria\\
hbelbachir@usthb.dz \& ibousbaa@usthb.dz}
\maketitle
\date{}
\begin{abstract}
A combinatorial methods are used to investigate some properties of certain
generalized Stirling numbers, including explicit formula and recurrence
relations. Furthermore, an expression of these numbers with symmetric
function is deduced.
\end{abstract}

\textbf{AMS Classification: }05A19, 11B37, 11B83, 11B75.

\textbf{Keywords: }Generalized Stirling numbers, combinatorial interpretation, inclusion-exclusion principal, symmetric functions.

\section{Introduction}

The Stirling numbers of the first kind $\stirlingf{n}{k}$, the second kind $%
\stirlings{n}{k}$ and the Third kind $\lah{n}{k}$ which are know as Lah numbers. These numbers are coefficients in the
expression of the raising-falling factorial of $x$, see \cite[pp 204]%
{MR1999993},

\begin{eqnarray}
\left( x\right) \overline{^{n}} &=&\sum_{k=0}^{n}\stirlingf{n}{k}x^{k}, \\
x^{n} &=&\sum_{k=0}^{n}\stirlings{n}{k}(x)^{\underline{k}},
\end{eqnarray}

\begin{equation*}
\left( x\right) \overline{^{n}} =\sum_{k=0}^{n}\lah{n}{k}(x)^{\underline{k}},
\end{equation*}
where $\left( x\right) \overline{^{n}}=x(x+1)\cdots (x+n-1)$ and $(x)^{\underline{n}}=x(x-1)\cdots (x-n+1)$, with $\left( x\right) \overline{^{0}}=(x)^{\underline{0}}=1$.

These three sequences satisfy respectively the following recurrence relations%
\begin{eqnarray}
\stirlingf{n}{k} &=&\stirlingf{n-1}{k-1}+\left( n-1\right) \stirlingf{n-1}{k},  \label{recS1} \\
\stirlings{n}{k} &=&\stirlings{n-1}{k-1}+k\stirlings{n-1}{k},  \label{recS22} \\
\lah{n}{k} &=&\lah{n-1}{k-1}+\left(n+k-1\right) \lah{n-1}{k}.  \label{recLah}
\end{eqnarray}%
with $\stirlingf{n}{0}=\stirlings{n}{0}=\lah{n}{0}=\delta _{n,0}$, where $\delta $ is the Kronecker delta, and for $n\neq 0$ $\stirlingf{n}{k}=\stirlings{n}{k}=\lah{n}{k}=0$ when $k\notin \{0,\ldots,n\}$.

The Stirling numbers of the both kinds and the Lah numbers have a
combinatorial interpretation, see for instance \cite{BelBou14,BelBou12,BelBel13}%
, as follows
\begin{itemize}
\item $\stirlingf{n}{k}$ count the
number of permutations of the set $\{1,\ldots ,n\}$ with $k$ cycles,

\item $\stirlings{n}{k}$ count
the number of partitions of the set $\{1,\ldots ,n\}$ into $k$ subsets,

\item $\lah{n}{k}$ count the number of
partitions of the set $\{1,\ldots ,n\}$ into $k$ ordered lists.
\end{itemize}

Many generalizations of the Stirling numbers were developed using:
combinatorial approach, see Howard \cite{MR600368}, Broder \cite{MR743795};
the falling factorial, see Belbachir et \textit{al} \cite{MR25}, Benoumhani
\cite{MR1415279}; or generating functions, see Carlitz \cite{MR570168,
MR531621}.

As a unified approach to the generalized Stirling numbers, Hsu and Shiue
\cite{MR1618435}, defined a Stirling-type pair $\left\{ S^{1}\left(
n,k\right) ,S^{2}\left( n,k\right) \right\} \equiv \left\{ S\left(
n,k;\alpha ,\beta ,r\right) ,S\left( n,k;\beta ,\alpha ,-r\right) \right\} $
by the inverse relations%
\begin{equation}
\left( x|\alpha \right)^{\underline{n}}=\sum_{k=0}^{n}S^{1}\left( n,k\right) \left(
x-r|\beta \right)^{\underline{k}},
\end{equation}%
\begin{equation}
(x|\beta )^{\underline{n}}=\sum_{k=0}^{n}S^{2}\left( n,k\right) \left( x+r|\alpha
\right)^{\underline{k}},
\end{equation}
where $\alpha ,\beta $ and $r$ are arbitrary parameters with $\left( \alpha
,\beta ,r\right) \neq (0,0,0)$ and $\left( x|\theta \right) ^{\underline{n}}$
is the generalized $n$-$th$ falling factorial of $x$ with increment $\theta $
defined by
\begin{eqnarray*}
\left( x|\theta \right) ^{\underline{n}} &=&x\left( x-\theta \right) \left(
x-2\theta \right) \cdots \left( x-\theta \left( n-1\right) \right) ,\text{ \
\ }n\geq 1, \\
\left( x|\theta \right) ^{\underline{0}} &=&1.
\end{eqnarray*}
The numbers $S\left( n,k;\alpha ,\beta ,r\right) $ satisfy the following
triangular recurrence relation
\begin{equation*}
S\left( n,k;\alpha ,\beta ,r\right) =S\left( n-1,k-1;\alpha ,\beta ,r\right)
+\left( \beta k-\alpha n+r\right) S\left( n-1,k;\alpha ,\beta ,r\right) ,
\end{equation*}
with $S\left( 0,0;\alpha ,\beta ,r\right) =S\left( n,n;\alpha ,\beta
,r\right) =1$ and $S\left( 1,0;\alpha ,\beta ,r\right) =r.$

Tsylova \cite{Tsy4285} gave a partial statistical interpretation of the numbers $%
A_{\beta ,\alpha }\left( k,n\right) $ which coincide with the special case $%
S\left( n,k;\alpha ,\beta ,0\right) $, also Belbachir and Bousbaa \cite{MR25}
define the translated Whitney numbers of the three kinds using a
combinatorial approach: the translated Whitney numbers of the first kind,
denoted $\stirlingf{n}{k}^{(\alpha )}$, count the number of
permutations of $n$ elements with $k$ cycles such that the elements of each
cycle can be colored in $\alpha $ ways except the smallest one; the
translated Whitney numbers of second kind, denoted $\stirlings{n}{k}^{(\alpha )}$,
count the partitions of the set $\{1,2,...,n\}$ into $k$
subsets such the elements of each subset can mute in $\alpha $ ways except
the smallest one; and the translated Whitney-Lah numbers, denoted $%
\lah{n}{k}^{(\alpha )}$, count the number of ways to
distribute the set $\{1,2,...,n\}$ into $k$ ordered lists such that the
elements of each list can mute with $\alpha $ ways, except the dominant one.
These three kinds of numbers correspond, respectively, to $\stirlingf{n}{k}%
^{(\alpha )}=S(n,k;-\alpha ,0,0)$, $\stirlings{n}{k}^{(\alpha
)}=S(n,k;0,\alpha ,0)$ and $\lah{n}{k}^{(\alpha
)}=S(n,k;-\alpha ,\alpha ,0)$. Motivated by the previews works, our aim is
to find combinatorial meaning for the numbers $S\left( n,k,\alpha
,\beta ,0\right) $ which are denoted by $\lah{n}{k}^{\alpha ,\beta }$. We start by giving the combinatorial
interpretation in the first section, also we prove combinatorially the
explicit formula using the inclusion and exclusion principle. In sections 2
and 3, we derive some recurrence relations and an expression using the
symmetric functions. We give, in the last section, a convolution identity.

\section{Combinatorial Interpretation and explicit formula}
Let $\Omega _{n,k}$ to be the set of all possible ways to distribute $n$
elements, denoted $1,2,\ldots ,n$,\ into $k$ ordered \textit{no empty}
lists, one element at a time, such that:

\begin{description}
  \item[({\romannumeral 1})]  we assign a weight of $\beta $ to the head list,
  \item[({\romannumeral 2})]  the remaining elements in the list have weight $\alpha$,
  \item[({\romannumeral 3})]  the first element putted in the list have a weight $1$.
\end{description}

Given a distribution $\varepsilon \in \Omega _{n,k},$ we define the weight
of $\varepsilon $, denoted by $w\left( \varepsilon \right) $, to be the
product of the weights of its elements. Since the total weight of $\Omega
_{n,k}$ is given by the sum of weights of all distributions.

To clarify the interpretation given above, we illustrate the case when $n=3$
and $k=1.$ First, the weight of the first element is $w\left( \left\{ \left(
1\right) \right\} \right) =1.$ Next, there are two ways to add the second
element either after the first element $\varepsilon _{1}=\left\{ \left(
1,2\right) \right\} $ with $w\left( \varepsilon _{1}\right) =\alpha $ or
before the first element $\varepsilon _{2}=\left\{ \left( 2,1\right)
\right\} $ with $w\left( \varepsilon _{2}\right) =\beta .$ Finally, to add
the third element
\begin{gather*}
\varepsilon _{11}=\left\{ \left( 1,2,3\right) \right\} \dashrightarrow
w\left( \varepsilon _{11}\right) =\alpha ^{2},\text{ \ }\varepsilon
_{12}=\left\{ \left( 1,3,2\right) \right\} \dashrightarrow w\left(
\varepsilon _{12}\right) =\alpha ^{2},\text{ } \\
\text{\ \ }\varepsilon _{13}=\left\{ \left( 3,1,2\right) \right\}
\dashrightarrow w\left( \varepsilon _{13}\right) =\alpha \beta ,\text{ \ }%
\varepsilon _{21}=\left\{ \left( 2,1,3\right) \right\} \dashrightarrow
w\left( \varepsilon _{21}\right) =\alpha \beta ,\text{ \ \ \ } \\
\varepsilon _{22}=\left\{ \left( 2,3,1\right) \right\} \dashrightarrow
w\left( \varepsilon _{22}\right) =\alpha \beta ,\text{ \ }\varepsilon
_{23}=\left\{ \left( 3,2,1\right) \right\} \dashrightarrow w\left(
\varepsilon _{23}\right) =\beta ^{2}.\text{ \ \ }
\end{gather*}%
Thus, the total weight of $\Omega _{3,1}$ is $2\alpha ^{2}+3\alpha \beta
+\beta ^{2}=\left( \alpha +\beta \right) \left( 2\alpha +\beta \right) .$

This approach suggests us the following result.
\begin{theorem}
For any non-negative integers $n$ and $k$, we have
\begin{equation}
\lah{n}{k}^{\alpha ,\beta }=\sum_{\varepsilon \in \Omega
_{n,k}}w\left( \varepsilon \right) .
\end{equation}
\end{theorem}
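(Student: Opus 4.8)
The plan is to show that the total weight of $\Omega_{n,k}$ satisfies the same triangular recurrence and the same initial conditions as $\lah{n}{k}^{\alpha,\beta}=S(n,k;\alpha,\beta,0)$, namely
\[
S(n,k;\alpha,\beta,0)=S(n-1,k-1;\alpha,\beta,0)+(\beta k-\alpha n)S(n-1,k;\alpha,\beta,0),
\]
with $S(0,0;\alpha,\beta,0)=1$ and the usual vanishing outside $0\le k\le n$. Once both sides obey the same recurrence with the same boundary data, equality follows by induction on $n$.

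First I would set $W(n,k):=\sum_{\varepsilon\in\Omega_{n,k}}w(\varepsilon)$ and check the base cases: $\Omega_{0,0}$ consists of the empty distribution of weight $1$, and $\Omega_{n,k}$ is empty when $k>n$ or $k=0<n$, so $W(n,k)$ matches the Kronecker-delta initial values and the support condition. Next I would establish the recurrence by classifying the distributions in $\Omega_{n,k}$ according to what happens to the last element $n$, which (by the ``one element at a time'' rule) is inserted after all of $1,\dots,n-1$ have been placed. Either $n$ starts a brand-new list — then it is the first element of that list, so it contributes weight $1$, and deleting it gives a bijection onto $\Omega_{n-1,k-1}$ that multiplies weights by $1$; this accounts for the term $W(n-1,k-1)$. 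Or $n$ is inserted into one of the $k$ already-nonempty lists formed by $1,\dots,n-1$; here I need to count, with the appropriate weight bookkeeping, the ways to slot $n$ into an existing configuration from $\Omega_{n-1,k}$.

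The main obstacle is precisely this second case: inserting $n$ into an existing list of length $\ell$ can be done in $\ell+1$ positions, but the weight changes depend on whether $n$ lands at the head (weight $\beta$, and it displaces the old head, whose weight switches from $\beta$ to $\alpha$) or elsewhere (weight $\alpha$). Summed over one list of length $\ell$, the weight multiplier from inserting $n$ is $\ell\alpha$ from the $\ell$ non-head positions, plus $\beta$ from the head position but with a compensating factor $\alpha/\beta$ on the displaced old head, i.e. $\ell\alpha+\alpha=(\ell+1)\alpha$... — so I must be careful: a cleaner accounting is that over all $k$ lists the total length is $n-1$, the ``head slots'' number $k$ (one per list) and contribute a net multiplier summing to $\beta k$ after the displaced-head correction, while the ``interior/tail slots'' number $(n-1)$ and each contributes $\alpha$; but since placing $n$ in front converts an old head into a non-head, the right bookkeeping gives total multiplier $\alpha(n-1)+\beta k$ rather than $\alpha n+\beta k$. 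Reconciling this with the coefficient $\beta k-\alpha n$ in the Hsu--Shiue recurrence is where the sign and the shift by one must be handled, so I would carry out that weight tally explicitly on a generic configuration in $\Omega_{n-1,k}$, confirm the multiplier equals $\beta k-\alpha n$ (after accounting for the convention that the parameters enter as in $S(n,k;\alpha,\beta,0)$ with the stated sign), and thereby obtain $W(n,k)=W(n-1,k-1)+(\beta k-\alpha n)W(n-1,k)$.

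Finally, having matched recurrence and initial conditions, I would invoke induction on $n$ (with $k$ ranging freely) to conclude $W(n,k)=\lah{n}{k}^{\alpha,\beta}$ for all non-negative integers $n,k$, which is the assertion of the theorem. As a sanity check I would verify the worked example $n=3$, $k=1$ against the recurrence: $W(3,1)=W(2,0)+(\beta-3\alpha)W(2,1)$... matching $2\alpha^2+3\alpha\beta+\beta^2$ once the sign convention on $\alpha$ is fixed, which also pins down the exact form of the coefficient to use in the induction step.
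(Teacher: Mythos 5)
Your overall strategy --- set $W(n,k):=\sum_{\varepsilon\in\Omega_{n,k}}w(\varepsilon)$, show it obeys the same triangular recurrence and boundary values as $\lah{n}{k}^{\alpha ,\beta }$, then induct on $n$ --- is exactly the route the paper takes (its Theorem 3 is precisely the combinatorial derivation of that recurrence by cases on the element $n$), so the skeleton is fine. The genuine gap is that you never settle what the insertion multiplier actually is, and the value you say you would ``confirm,'' namely $\beta k-\alpha n$, is wrong and cannot be confirmed. Under the paper's weighting an element's weight is fixed at the moment of insertion and never changes afterwards; this is forced by the worked example, where $\varepsilon _{23}=\{(3,2,1)\}$ has weight $\beta ^{2}$ even though only one element occupies the head of the final list. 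So there is no ``displaced head whose weight switches from $\beta $ to $\alpha $'' and no compensating factor $\alpha /\beta $. The tally is then immediate: a configuration in $\Omega _{n-1,k}$ offers $n-1$ slots ``after an existing element,'' each of weight $\alpha $, and $k$ head slots, each of weight $\beta $, giving the multiplier $\alpha (n-1)+\beta k$, exactly the coefficient in the paper's Theorem 3.

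The reconciliation you gesture at but leave open has two independent pieces. First, the paper's $\lah{n}{k}^{\alpha ,\beta }$ corresponds to $S(n,k;-\alpha ,\beta ,0)$ in the Hsu--Shiue parametrization (the same sign flip as in $\stirlingf{n}{k}^{(\alpha )}=S(n,k;-\alpha ,0,0)$ and $\lah{n}{k}^{(\alpha )}=S(n,k;-\alpha ,\alpha ,0)$ quoted in the introduction), which turns $-\alpha $ into $+\alpha $. Second, the correctly indexed Hsu--Shiue recurrence is $S(n,k)=S(n-1,k-1)+(\beta k-\alpha (n-1)+r)S(n-1,k)$, i.e., the coefficient involves $n-1$, not $n$. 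Without fixing both points your induction step establishes $W(n,k)=W(n-1,k-1)+(\alpha (n-1)+\beta k)W(n-1,k)$ but compares it against the wrong target recurrence; indeed your own sanity check $W(3,1)=(\beta -3\alpha )W(2,1)=(\beta -3\alpha )(\alpha +\beta )$ does not equal $2\alpha ^{2}+3\alpha \beta +\beta ^{2}$, and even after $\alpha \mapsto -\alpha $ the coefficient $\beta +3\alpha $ still disagrees with the correct $\beta +2\alpha $. Once you take the recurrence with coefficient $\alpha (n-1)+\beta k$ as the one defining $\lah{n}{k}^{\alpha ,\beta }$ (as the paper does), the rest of your argument --- the weight-preserving bijection with $\Omega _{n-1,k-1}$ when $n$ is a singleton, the initial conditions, and the induction --- goes through.
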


It is clear that%
\begin{equation}
\lah{n}{1}^{\alpha ,\beta
}=\prod\limits_{j=1}^{n-1}\left( j\alpha +\beta \right) . \label{h}
\end{equation}
The following result gives an explicit formulation of $\lah{n}{k}^{\alpha,\beta}$. The proof is based on inclusion-exclusion principle.
Without loose the generality, we can suppose $\alpha,\beta \in $ (or $\in \Re$).

\begin{theorem}
For any non-negative integers $n,k$, we have
\begin{equation}
\lah{n}{k}^{\alpha ,\beta }=\frac{1}{\beta ^{k}k!}%
\sum_{j=0}^{k}\left( -1\right) ^{j}\dbinom{k}{j}\left( \beta (k-j)|\alpha
\right) \overline{^{n}}.
\end{equation}
where $\left( \beta \left( k-j\right)|\alpha \right) ^{\overline{n}}= \beta \left(
k-j\right) \left( \beta \left( k-j\right) +\alpha \right) \cdots \left(
\beta \left( k-j\right) +(n-1)\alpha \right).$
\end{theorem}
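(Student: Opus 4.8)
The plan is to establish the equivalent identity
\[
\beta^{k}k!\,\lah{n}{k}^{\alpha ,\beta }=\sum_{j=0}^{k}(-1)^{j}\binom{k}{j}\left( \beta (k-j)|\alpha \right) \overline{^{n}}
\]
by realizing both sides as weighted counts of distributions of $\{1,\dots ,n\}$ into \emph{labeled} ordered lists and comparing them through the inclusion--exclusion principle. Throughout I read the weight rules in the operational form dictated by the worked example: the elements are inserted in the order $1,2,\dots ,n$, an element that is the first one placed into its list gets weight $1$, an element inserted in front of all the elements currently in its list gets weight $\beta $, and any other insertion gets weight $\alpha $; the key point is that the weight contributed by an element is decided at the moment of its insertion and does not change afterwards.

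Step 1 (the left-hand side as a labeled count). Starting from the identity $\lah{n}{k}^{\alpha ,\beta }=\sum_{\varepsilon \in \Omega _{n,k}}w\left( \varepsilon \right) $ established above, I first replace the weight $1$ of the first-placed element of each list by $\beta $; since every $\varepsilon \in \Omega _{n,k}$ consists of $k$ non-empty lists, hence has exactly $k$ such elements, and no other weight is affected, this multiplies every weight by $\beta ^{k}$. Next I make the $k$ lists distinguishable: as all lists are non-empty, each distribution gives rise to exactly $k!$ labeled distributions of equal weight. Hence $\beta ^{k}k!\,\lah{n}{k}^{\alpha ,\beta }=\sum_{\eta }w^{\ast }\left( \eta \right) $, where $\eta $ runs over the ways of distributing $\{1,\dots ,n\}$ into $k$ labeled, non-empty, ordered lists and $w^{\ast }$ is the modified weighting in which first-placed elements also receive weight $\beta $.

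Step 2 (the basic product formula). For every $m\ge 0$ I claim that the total $w^{\ast }$-weight of all distributions of $\{1,\dots ,n\}$ into $m$ labeled, \emph{possibly empty}, ordered lists equals $\left( \beta m|\alpha \right) \overline{^{n}}=\prod_{i=1}^{n}\left( \beta m+(i-1)\alpha \right) $. This follows from a one-element-at-a-time count: when the $i$-th element is about to be inserted and the lists currently hold $\ell _{1},\dots ,\ell _{m}$ elements with $\ell _{1}+\dots +\ell _{m}=i-1$, list $t$ offers one ``in front'' slot of weight $\beta $ and $\ell _{t}$ further slots of weight $\alpha $ (an empty list contributing just its single weight-$\beta $ slot, consistently), so the slots available at this stage carry total weight $\sum_{t}\left( \beta +\ell _{t}\alpha \right) =\beta m+(i-1)\alpha $, independently of the current configuration; multiplying over $i=1,\dots ,n$ yields the claim. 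This is precisely the slot count that defines $\left( \beta m|\alpha \right) \overline{^{n}}$, so the only genuine content here is to identify the slot weights correctly.

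Step 3 (inclusion--exclusion and conclusion). Expanding the condition ``no list is empty'' over the $k$ labeled lists, the total $w^{\ast }$-weight of the distributions into $k$ labeled non-empty lists equals $\sum_{S}(-1)^{|S|}$ times the total $w^{\ast }$-weight of those distributions that use only the $k-|S|$ lists outside $S$, the latter lists being free to be empty; by Step 2 this inner quantity is $\left( \beta (k-|S|)|\alpha \right) \overline{^{n}}$, and grouping the sum by $j=|S|$ gives $\sum_{j=0}^{k}(-1)^{j}\binom{k}{j}\left( \beta (k-j)|\alpha \right) \overline{^{n}}$. Together with Step 1 this is the asserted formula, after division by $\beta ^{k}k!$. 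The degenerate cases $n=0$ and $k=0$ are immediate (both sides reduce to $\delta _{n,0}$, using $(0|\alpha )\overline{^{0}}=1$ and $(0|\alpha )\overline{^{n}}=0$ for $n\ge 1$), and for $\beta =0$ one observes that $\beta ^{k}\lah{n}{k}^{\alpha ,\beta }$ and the right-hand side are polynomials in $\alpha ,\beta $, so it is enough to argue for $\beta \ne 0$. The step I expect to be most delicate is Step 1: one must be sure that the two reweightings --- first-placed weight $1\mapsto \beta $, and unlabeled $\mapsto $ labeled --- amount \emph{exactly} to multiplication by $\beta ^{k}$ and by $k!$, which is where the hypothesis that all $k$ lists are non-empty is used.
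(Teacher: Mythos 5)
Your proposal is correct and follows essentially the same route as the paper's own proof: reweight the first-placed element of each list to $\beta$, pass to $k!$ labeled lists, compute the total weight of distributions into $m$ possibly-empty labeled lists as $\left( \beta m|\alpha \right) ^{\overline{n}}$ by a slot count, and apply inclusion--exclusion over the empty lists before dividing by $\beta ^{k}k!$. Your treatment is in fact somewhat more careful than the paper's about justifying the $\beta ^{k}k!$ normalization and the degenerate cases $\beta =0$, $n=0$, $k=0$.
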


\begin{proof}
Let $\phi$ denote the set of all possible ways to distribute $n$
elements, denoted $1,2,\ldots ,n$,\ into $k$ ordered lists (labeled and not
necessary non-empty), one element at a time, such that:

- the first element putted in the list have a weight $\beta $,

- we assign a weight of $\beta $\ to the element inserted as head list,

- the remaining elements in the list have a weight $\alpha $.

The total weight of set $\phi $ is the sum of all weights of all the
distributions. There are three differences with the initial interpretation:
the first one concerns the no empty lists and the second one concerns the
weight $\beta $ assigned to the first element putted in each list and the
third one is the order between the lists. the two last differences will be
considered at the end of the proof.

Now, let $\Delta $ be the subset of $\phi $ which have non-empty lists.
We want to count the total weight of subset $\Delta $.

For $j$ $\left( 1\leq j\leq k\right), $let $A_{j}$ be the subset of $k$
labeled lists of $\phi $ such that the $j$-$th$ list\ is empty. Then
\begin{equation*}
\Delta =\overline{A_{1}}\cap \overline{A_{2}}\cap \cdots \cap \overline{A_{k}},
\end{equation*}
where
\begin{equation*}
\overline{A_{j}}=\phi \backslash A_{j}\text{ \ and \ }\left\vert
\overline{A_{1}}\cap \overline{A_{2}}\cap \cdots \cap \overline{A_{k}}%
\right\vert =\left\vert \phi \right\vert -\left\vert A_{1}\cup A_{2}\cup
\cdots \cup A_{k}\right\vert .
\end{equation*}
Applying inclusion-exclusion principle, we get%
\begin{equation}
\left\vert \Delta \right\vert =\left\vert \phi \right\vert
-\sum_{j=1}^{k}\left( -1\right) ^{j}\sum_{1\leq i_{1}<i_{2}<\cdots
<i_{j}\leq k}\left\vert A_{i_{1}}\cap A_{i_{2}}\cap \cdots \cap
A_{i_{j}}\right\vert .
\end{equation}%
We compute the general term $\sum_{1\leq i_{1}<i_{2}<\cdots <i_{j}\leq
k}\left\vert A_{i_{1}}\cap A_{i_{2}}\cap \cdots \cap A_{i_{j}}\right\vert $,
for a fixed $j$, there are $\tbinom{k}{j}$ ways to select $j$ empty lists
from $k$. And to distribute $n$ elements in the remaining $k-j$ lists, so
the first element have $k-j$ choices with the weight $\beta $ which gives a total weight of $\beta \left( k-j\right)$,
the second one have a weight of $\beta \left( k-j\right)+\alpha $
 coming from the $\left( k-j\right) $ choices as head
list with weight $\beta $ or after the inserted element with weight $\alpha $
and so on until the last element which has a weight of $\beta \left( k-j\right)
+(n-1)\alpha $. So, the total weight of this distribution is $\beta \left(
k-j\right) \left( \beta \left( k-j\right) +\alpha \right) \cdots \left(
\beta \left( k-j\right) +(n-1)\alpha \right) =\left( \beta \left( k-j\right)
|\alpha \right) ^{\overline{n}}.$ Thus%

\begin{equation*}
\sum_{1\leq i_{1}<i_{2}<\cdots <i_{j}\leq k}\left\vert A_{i_{1}}\cap
A_{i_{2}}\cap \cdots \cap A_{i_{j}}\right\vert =\dbinom{k}{j}\left( \beta
\left( k-j\right) |\alpha \right) ^{\overline{n}},
\end{equation*}%
and we get%
\begin{equation*}
\left\vert \Delta \right\vert =\sum_{j=0}^{k}\left( -1\right) ^{j}\dbinom{k}{%
j}\left( \beta (k-j)|\alpha \right) ^{\overline{n}}.
\end{equation*}

We divide by $k!$ to avoid the repeated permutations and by $\beta ^{k}$ to
give the $k$ first elements inserted in the lists the weight $1$.
\end{proof}

\section{Recurrence relations}
In this section, we give combinatorial proofs of the three types of
recurrence relations : the triangular, the horizontal and the vertical
recurrence relation.

\begin{theorem}
The numbers $\lah{n}{k}^{\alpha ,\beta }$ satisfy the
following triangular recurrence relation%
\begin{equation}
\lah{n}{k}^{\alpha ,\beta }=\lah{n-1}{k-1}^{\alpha ,\beta }+\left( \alpha \left( n-1\right) +\beta
k\right) \lah{n-1}{k}^{\alpha ,\beta },
\end{equation}%
where $\lah{n}{n}^{\alpha ,\beta }=1$ and $\lah{n}{k}^{\alpha ,\beta }=0$ for $k\in\{0,\ldots,n \}.$
\end{theorem}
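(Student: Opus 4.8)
The plan is to prove the recurrence combinatorially, using the interpretation of Theorem~1, by sorting the distributions of $\Omega_{n,k}$ according to the role played by the element $n$ (the last one inserted) and splitting the total weight $\sum_{\varepsilon\in\Omega_{n,k}}w(\varepsilon)$ accordingly.

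The first family consists of the distributions in which $n$ forms a list by itself. By rule~(\romannumeral 3) the element $n$ then carries weight $1$, so erasing this singleton list is a weight-preserving bijection onto $\Omega_{n-1,k-1}$; this family therefore contributes $\lah{n-1}{k-1}^{\alpha,\beta}$.

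The second family consists of the distributions in which $n$ lies in a list of size at least two. Because the elements are inserted in increasing order, $n$ is the largest element present at the moment it is placed, hence no later insertion can move it; consequently the weight it acquired at insertion time --- $\beta$ if it became the new head of its list, $\alpha$ if it was put immediately after some element --- is exactly the weight it carries in the final distribution. Deleting $n$ from its list produces a distribution $\varepsilon'\in\Omega_{n-1,k}$ (still $k$ non-empty lists, since $n$'s list had at least two elements), and conversely each $\varepsilon'\in\Omega_{n-1,k}$ arises in this way from precisely the distributions obtained by re-inserting $n$ either as the head of one of the $k$ (distinct) lists --- $k$ choices, each multiplying $w(\varepsilon')$ by $\beta$ --- or immediately after one of the $n-1$ elements already present --- $n-1$ choices, each multiplying $w(\varepsilon')$ by $\alpha$. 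Summing over all these insertions and over all $\varepsilon'$, this family contributes $\bigl(\alpha(n-1)+\beta k\bigr)\lah{n-1}{k}^{\alpha,\beta}$.

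Adding the two contributions gives the claimed triangular recurrence. For the boundary values, the only distribution counted by $\lah{n}{n}^{\alpha,\beta}$ is the one with $n$ singleton lists, each of whose unique element has weight $1$, whence $\lah{n}{n}^{\alpha,\beta}=1$; and $\Omega_{n,k}=\varnothing$ outside $k\in\{0,\dots,n\}$ (for $k>n$ one cannot keep $k$ lists non-empty), which yields the stated vanishing together with $\lah{n}{0}^{\alpha,\beta}=\delta_{n,0}$. I expect the only delicate point to be the bookkeeping in the second family: justifying that the weight of $n$ in the final configuration equals its insertion weight, and that the ``new head'' and ``after an existing element'' insertions together account bijectively, and with the correct weight factors, for all enlargements of a member of $\Omega_{n-1,k}$; the rest is immediate from Theorem~1.
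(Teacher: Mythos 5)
Your proof is correct and follows essentially the same route as the paper's: both split the total weight of $\Omega_{n,k}$ according to whether the last element $n$ is a singleton list (contributing $\lah{n-1}{k-1}^{\alpha,\beta}$) or is inserted into an existing list as a new head ($k$ choices, weight $\beta$) or after one of the $n-1$ earlier elements (weight $\alpha$). Your treatment is in fact slightly more careful than the paper's, since you also justify that the insertion weight of $n$ persists in the final configuration and you check the boundary values (noting, correctly, that the vanishing condition should read $k\notin\{0,\ldots,n\}$).
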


\begin{proof}
We count the total distribution weight of the set $\left\{ 1,2,\ldots
,n\right\} $ into $k$ ordered, non-empty, lists according to the situation
of the last element $\left\{ n\right\} $.

$\bullet $ \ If $\left \{ n\right \} $ is a singleton (with weight $1$), the
remaining $n-1$ elements have to be distribute into $k-1$ ordered lists with
weight $\lah{n-1}{k-1}^{\alpha ,\beta }$.

$\bullet $ \ If $\left\{ n\right\} $ is not a singleton, the element $n$ is
in one of the $k$ lists with some other elements. Total weight of
distributing set $\left\{ 1,2,\ldots ,n-1\right\} $ into $k$ ordered no
empty lists is $\lah{n-1}{k}^{\alpha ,\beta }$, and
there are $n-1$ choices to insert the element $n$ after any of the elements $%
1,2,\ldots ,n-1$ with weight $\alpha $ and $k$ choices to insert the element
$n$ as head list with weight $\beta $. Hence, the weight is $\left( \alpha
\left( n-1\right) +\beta k\right) \lah{n-1}{k}^{\alpha
,\beta }.$
\end{proof}

In particular, for $\left( -\alpha ,1\right) $ and $\left( -1,\beta \right) $%
, the Theorem $3$ will reduce to triangular recurrence relation for
degenerate Stirling numbers \cite{MR531621}. Furthermore, for $\left( \alpha
,0\right) ,\left( 0,\beta \right) $ and $\left( \alpha ,\alpha \right) ,$ we
obtain the triangular recurrence relation of the translated Whitney numbers
of the both kinds and of the Whitney-Lah numbers \cite{MR25} respectively.

Next, we give a horizontal recurrence relation using combinatorial proof.

\begin{theorem}
For non negative integers $n,k$, we have%
\begin{equation}
\lah{n}{k}^{\alpha ,\beta }=\sum_{j=0}^{n-k}\left(
-1\right) ^{j}\left( \left( k+1\right) \beta +n\alpha |\alpha \right) ^{%
\overline{j}}\lah{n+1}{k+j+1}^{\alpha ,\beta }.
\end{equation}
\end{theorem}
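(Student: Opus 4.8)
The plan is to obtain the stated identity by reading the triangular recurrence of Theorem 3 backwards in the lower index. Replacing $n$ by $n+1$ and then $k$ by $k+1$ in that recurrence gives
\[
\lah{n+1}{k+1}^{\alpha,\beta}=\lah{n}{k}^{\alpha,\beta}+\left(\alpha n+\beta(k+1)\right)\lah{n}{k+1}^{\alpha,\beta},
\]
and solving for the first term on the right isolates
\[
\lah{n}{k}^{\alpha,\beta}=\lah{n+1}{k+1}^{\alpha,\beta}-\left(\alpha n+\beta(k+1)\right)\lah{n}{k+1}^{\alpha,\beta};
\]
call this relation $(\star)$. It expresses a given entry through the entry immediately above-right together with its right neighbour, so repeatedly substituting $(\star)$ into its own last term will sweep the lower index upward and produce a sum of terms $\lah{n+1}{k+j+1}^{\alpha,\beta}$, which is exactly the shape of the claimed formula.

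Next I would iterate $(\star)$. Using it with $k$ replaced successively by $k+1,k+2,\dots$ to expand the residual term, I expect to prove by induction on the number $m$ of substitutions that
\[
\lah{n}{k}^{\alpha,\beta}=\sum_{j=0}^{m-1}(-1)^{j}P_{j}\,\lah{n+1}{k+j+1}^{\alpha,\beta}+(-1)^{m}P_{m}\,\lah{n}{k+m}^{\alpha,\beta},
\]
where $P_{0}=1$ and $P_{j}=\prod_{l=1}^{j}\left(\alpha n+\beta(k+l)\right)$. The inductive step is immediate from $(\star)$: applying it to the residual $\lah{n}{k+m}^{\alpha,\beta}$ creates a new summand at index $j=m$ and a new residual at index $k+m+1$, while the coefficient is multiplied by exactly $\alpha n+\beta(k+m+1)$, advancing $P_m$ to $P_{m+1}$.

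To close the argument I would take $m=n-k+1$; by the boundary convention $\lah{n}{k'}^{\alpha,\beta}=0$ for $k'>n$ the residual then carries $\lah{n}{n+1}^{\alpha,\beta}=0$ and drops, leaving precisely the sum over $0\le j\le n-k$ with coefficients $(-1)^{j}P_{j}$. What remains is to identify $P_{j}$ with the generalized rising factorial $\left((k+1)\beta+n\alpha\mid\alpha\right)^{\overline{j}}$ displayed in the statement, whose factors are $(k+1)\beta+n\alpha,\ (k+1)\beta+(n+1)\alpha,\dots,(k+1)\beta+(n+j-1)\alpha$, i.e. factors advancing by $\alpha$. This matching is the main obstacle: the factors actually produced by $(\star)$ are $\alpha n+\beta(k+1),\dots,\alpha n+\beta(k+j)$, whose common difference is $\beta$, so the two products agree term-by-term only if the increment of the bar-factorial is read as $\beta$ rather than $\alpha$. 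Pinning down this increment — and thereby the exact bar-factorial that $P_{j}$ represents — is the crux of the proof and the place where the argument is most sensitive to the roles of $\alpha$ and $\beta$.
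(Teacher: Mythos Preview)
Your approach is exactly the paper's: both arguments start from the triangular recurrence of Theorem~3, rewrite it as
\[
\lah{n}{k}^{\alpha,\beta}=\lah{n+1}{k+1}^{\alpha,\beta}-\bigl((k+1)\beta+n\alpha\bigr)\lah{n}{k+1}^{\alpha,\beta},
\]
and then iterate in the lower index until the residual term vanishes by the boundary condition; the paper merely phrases each application of this step combinatorially (removing the singleton $\{n+1\}$) rather than algebraically.

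The obstacle you flag at the end is not a flaw in your argument but a typo in the theorem as printed. The paper's own iteration produces the factors $(k+1)\beta+n\alpha,\ (k+2)\beta+n\alpha,\ldots,(k+j)\beta+n\alpha$, whose common increment is $\beta$, so the coefficient is $\bigl((k+1)\beta+n\alpha\,\big|\,\beta\bigr)^{\overline{j}}$, not $\bigl((k+1)\beta+n\alpha\,\big|\,\alpha\bigr)^{\overline{j}}$. With that correction your product $P_j$ matches on the nose and the proof is complete.
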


\begin{proof}
We proceed by construction, the total weight of distributing a set $\left\{
1,2,\ldots ,n\right\} $ into $k$ ordered lists can be obtained from a total
weight of distributing the set $\left\{ 1,2,\ldots ,n+1\right\} $ into $k+1$
ordered lists \ excluding the weight of distributions which does not contain
element $n+1$ as singleton. Then, we obtain $\lah{n}{k}^{\alpha ,\beta }=$ $\lah{n+1}{k+1}^{\alpha ,\beta
}-\left( \left( k+1\right) \beta -n\alpha \right) \lah{n}{k+1}^{\alpha ,\beta }.$

Now, $\lah{n}{k+1}^{\alpha,\beta }$corresponds to the total weight of distribution a set $\left\{
1,2,\ldots ,n\right\} $ into $k+1$ ordered lists, it can be obtained from a
total weight of distribution set $\left\{ 1,2,\ldots ,n+1\right\} $ into $%
k+2 $ ordered lists excluding the weight of distributions which does not
contain element $n+1$ as singleton, which gives:
\begin{eqnarray*}
\lah{n}{k}^{\alpha ,\beta } &=&\lah{n+1}{k+1}^{\alpha ,\beta }-\left( \left( k+1\right) \beta -n\alpha \right)
\left( \lah{n+1}{k+2}^{\alpha ,\beta }-\left( \left(
k+2\right) \beta -n\alpha \right) \lah{n}{k+2}^{\alpha,\beta }\right) \\
&=&\lah{n+1}{k+1}^{\alpha ,\beta }-\left( \left(
k+1\right) \beta -n\alpha \right) \lah{n+1}{k+2}^{\alpha
,\beta }+\left( \left( k+1\right) \beta -n\alpha \right) \left( \left(
k+2\right) \beta -n\alpha \right) \lah{n}{k+2}^{\alpha
,\beta } \\
&&\vdots \text{ \ } \\
&=&\sum_{j=0}^{n-k}\left( -1\right) ^{j}\left( \left( k+1\right) \beta
+n\alpha |\alpha \right) ^{\overline{j}}\lah{n+1}{k+j+1}^{\alpha ,\beta }.
\end{eqnarray*}
\end{proof}

The last theorem in this Section is a vertical recurrence relation with combinatorial proof.
\begin{theorem}
Let $n$ and $k$ be non negative integers,  we have%
\begin{equation}
\lah{n+1}{k+1}^{\alpha ,\beta }=\sum_{i=k}^{n}\left(
\alpha +\beta |\alpha \right) ^{\overline{n-i}}\dbinom{n}{i}\lah{i}{k}^{\alpha ,\beta }.
\end{equation}
\end{theorem}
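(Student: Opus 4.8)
The plan is to establish this vertical recurrence combinatorially, in the spirit of the proofs of the two preceding recurrences, by sorting the distributions counted by $\lah{n+1}{k+1}^{\alpha ,\beta }$ according to the list that contains the largest element $n+1$. The first thing I would record is the observation that makes the right-hand side transparent: by (\ref{h}),
\[
\left( \alpha +\beta |\alpha \right) ^{\overline{n-i}}=\prod_{j=1}^{n-i}\left( j\alpha +\beta \right) =\lah{n+1-i}{1}^{\alpha ,\beta },
\]
so that $\left( \alpha +\beta |\alpha \right) ^{\overline{n-i}}$ is exactly the total weight of the ways of arranging a prescribed set of $n+1-i$ elements into a single ordered non-empty list.

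Next I would carry out the sorting. Take $\varepsilon \in \Omega _{n+1,k+1}$ and let $L$ be the list of $\varepsilon$ that contains $n+1$; put $\left\vert L\right\vert =n+1-i$. Because $L$ is non-empty we get $i\leq n$, and because the other $k$ lists are non-empty we get $i\geq k$. Such an $\varepsilon$ amounts to three independent pieces of data: (a) the set of the $n-i$ elements of $\left\{ 1,\ldots ,n\right\} $ that lie in $L$ besides $n+1$, which can be chosen in $\binom{n}{n-i}=\binom{n}{i}$ ways; (b) the internal arrangement of the $n+1-i$ elements into the single list $L$; (c) the distribution of the remaining $i$ elements into the other $k$ ordered non-empty lists. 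Summing the weights over the choices in (b) gives $\lah{n+1-i}{1}^{\alpha ,\beta }=\left( \alpha +\beta |\alpha \right) ^{\overline{n-i}}$ by the observation above, and summing over the choices in (c) gives $\lah{i}{k}^{\alpha ,\beta }$, because the total weight of the distributions of an $i$-element set into $k$ lists depends only on $i$ and $k$ (the weight rule only sees the relative order of the elements, so any $i$-element set gives the same total as $\left\{ 1,\ldots ,i\right\} $).

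The one point that needs care — and the step I expect to be the only real obstacle — is to check that this three-fold decomposition is weight-preserving, i.e. that $w(\varepsilon )$ is the product of the weight of the one-list part $L$ and the weight of the complementary $k$-list part. For this I would note that, since the elements are inserted one at a time in the natural order $1,2,\ldots ,n+1$, the first element ever placed in any list is automatically the least element of that list; hence the weight borne by an element is governed solely by its role inside its own list — weight $1$ if it is the minimum of the list, weight $\beta $ if it was inserted at the head, weight $\alpha $ otherwise — and this role is unchanged whether the list is viewed as part of $\varepsilon $ or on its own. Thus $w(\varepsilon )$ factors as a product over the $k+1$ lists of $\varepsilon $, in particular across the split into $\left\{ L\right\} $ and the other $k$ lists, and the decomposition is weight-preserving. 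Multiplying the contributions from (a)–(c) and summing over $k\leq i\leq n$ then gives
\[
\lah{n+1}{k+1}^{\alpha ,\beta }=\sum_{i=k}^{n}\left( \alpha +\beta |\alpha \right) ^{\overline{n-i}}\binom{n}{i}\lah{i}{k}^{\alpha ,\beta },
\]
which is the asserted identity; as a sanity check, the term $i=n$ accounts for the distributions in which $\left\{ n+1\right\} $ is a singleton and contributes $\lah{n}{k}^{\alpha ,\beta }$, while the term $i=k$ forces the remaining $k$ lists to be singletons and contributes $\binom{n}{k}\left( \alpha +\beta |\alpha \right) ^{\overline{n-k}}$.
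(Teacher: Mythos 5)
Your argument is correct and is essentially the paper's own proof: both decompose a distribution counted by $\lah{n+1}{k+1}^{\alpha ,\beta }$ by choosing the $i$ elements not in the list of $n+1$ (giving $\binom{n}{i}\lah{i}{k}^{\alpha ,\beta }$) and weighting the single list containing $n+1$ by $\lah{n+1-i}{1}^{\alpha ,\beta }=\left( \alpha +\beta |\alpha \right) ^{\overline{n-i}}$ via (\ref{h}). Your added verification that the weight factors across lists is a detail the paper leaves implicit, but the approach is the same.
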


\begin{proof}
Let us consider the $i$ $(k\leqslant i\leqslant n)$ elements not in the same list of the element $n+1$. We have $\binom{n}{i}$ ways to choose the $i$ elements and the total weight to constitute the $k$ lists is $\lah{i}{k}^{\alpha ,\beta }$, the remaining $n-i+1$ elements belong to the same list and we have $%
\lah{n+1-i}{1}^{\alpha ,\beta }=\prod_{j=1}^{n-i}\left( j\alpha +\beta \right) $ (see \ref{h}). We conclude by summing.
\end{proof}

For $\left( \alpha ,\beta \right) =\left( 1,0\right) ,$ $\left( 0,1\right) $
and $\left( 1,1\right) $ we get the identities \cite[eq. 30]{MR743795}, \cite%
[eq. 35]{MR743795} and \cite[eq. 11]{BelBou14} respectively.

\section{Relation with symmetric functions}

The generalized Stirling numbers $\lah{n+k}{n}^{\alpha
,\beta }$, for fixed $n$, are the elementary symmetric functions of the
numbers $1,\ldots ,n$.

\begin{theorem}
For non negative integers $n,k,\alpha ,\beta ,$ we have%
\begin{eqnarray*}
\lah{n+k}{n}^{\alpha ,\beta } &=&\sum_{1\leqslant
i_{1}\leqslant \cdots \leqslant i_{k}\leqslant
n}\prod\limits_{j=1}^{k}\left( \left( \alpha +\beta \right) i_{j}+\alpha
\left( j-1\right) \right) , \\
&=&\sum_{1\leqslant i_{1}\leqslant \cdots \leqslant i_{k}\leqslant n}\left(
\alpha +\beta \right) i_{1}\left( \left( \alpha +\beta \right) i_{2}+\alpha
\right) \cdots \left( \left( \alpha +\beta \right) i_{k}+\alpha \left(
k-1\right) \right) .
\end{eqnarray*}
\end{theorem}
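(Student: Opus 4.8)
The plan is to show that the right-hand side of the claimed identity, which I denote $g(n,k)$, satisfies the very same triangular recurrence as $\lah{n+k}{n}^{\alpha ,\beta}$ and agrees with it on the boundary; the equality then follows by induction. So first I set
\[
g(n,k)=\sum_{1\leqslant i_{1}\leqslant\cdots\leqslant i_{k}\leqslant n}\ \prod_{j=1}^{k}\bigl((\alpha+\beta)i_{j}+\alpha(j-1)\bigr),
\]
with the conventions $g(n,0)=1$ (empty product) and $g(0,k)=0$ for $k\geqslant 1$ (no admissible index string). I record the boundary agreements I will use: $\lah{n}{n}^{\alpha,\beta}=1=g(n,0)$, while $g(0,k)=0=\lah{k}{0}^{\alpha,\beta}$ for $k\geqslant 1$; as a consistency check, identity (\ref{h}) gives $\lah{k+1}{1}^{\alpha,\beta}=\prod_{j=1}^{k}(j\alpha+\beta)=\prod_{j=1}^{k}\bigl((\alpha+\beta)+\alpha(j-1)\bigr)=g(1,k)$.

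The key step is to split the sum defining $g(n,k)$ according to the value of the largest index $i_{k}$. The strings with $i_{k}\leqslant n-1$ are exactly those counted by $g(n-1,k)$. For the strings with $i_{k}=n$ I pull out the $j=k$ factor $(\alpha+\beta)n+\alpha(k-1)$; what remains is $\sum_{1\leqslant i_{1}\leqslant\cdots\leqslant i_{k-1}\leqslant n}\prod_{j=1}^{k-1}\bigl((\alpha+\beta)i_{j}+\alpha(j-1)\bigr)=g(n,k-1)$. Since $(\alpha+\beta)n+\alpha(k-1)=\alpha(n+k-1)+\beta n$, this gives
\[
g(n,k)=g(n-1,k)+\bigl(\alpha(n+k-1)+\beta n\bigr)\,g(n,k-1).
\]
On the other hand, the triangular recurrence of Theorem 3, taken with its first argument $n+k$ and its second argument $n$, reads
\[
\lah{n+k}{n}^{\alpha,\beta}=\lah{(n-1)+k}{n-1}^{\alpha,\beta}+\bigl(\alpha(n+k-1)+\beta n\bigr)\,\lah{n+(k-1)}{n}^{\alpha,\beta},
\]
i.e.\ precisely the same recurrence for the numbers $\lah{n+k}{n}^{\alpha,\beta}$ (note that the two right-hand terms have index sum $n+k-1$). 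Therefore, by induction on $n+k$ — the base cases $k=0$ and $n=0$ being handled above — we conclude $\lah{n+k}{n}^{\alpha,\beta}=g(n,k)$. The second displayed form in the statement is merely the first with the $j=1$ factor written out as $(\alpha+\beta)i_{1}$, using $\alpha(j-1)=0$ for $j=1$.

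The only point that needs real care — and essentially the sole obstacle — is the index bookkeeping: one must verify that removing the term $i_{k}=n$ in $g$ corresponds to lowering the \emph{first} argument of $\lah{n+k}{n}^{\alpha,\beta}$ while keeping ``$k$'' fixed, and that the multiplier $(\alpha+\beta)n+\alpha(k-1)$ so produced coincides with the coefficient $\alpha(N-1)+\beta K$ of Theorem 3 under $N=n+k$, $K=n$. Everything else is routine algebra. (One could alternatively attempt a direct combinatorial argument, tracking in a weighted distribution of $\{1,\dots,n+k\}$ into $n$ non-empty ordered lists the $k$ insertions of non-minimal elements and the weighted number of positions available at each step, but the recurrence computation above is the shorter route.)
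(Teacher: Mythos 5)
Your proof is correct, but it takes a genuinely different route from the paper's. The paper argues directly and combinatorially: it reads $\lah{n+k}{n}^{\alpha,\beta}$ as the total weight of distributing $\{1,\ldots,n+k\}$ into $n$ ordered non-empty lists, seeds the $n$ lists with the elements $1,\ldots,n$ as singletons of weight $1$, and then inserts $n+1,\ldots,n+k$ one at a time, showing that the $j$-th inserted element contributes a factor $(\alpha+\beta)i_{j}+\alpha(j-1)$ once one accounts both for placing it in list $i_{j}$ and for the bookkeeping device of ``moving'' the initial element of that list leftward to avoid double counting --- this is exactly the alternative you parenthetically sketch and set aside. Your argument instead verifies that the symmetric-function sum $g(n,k)$ satisfies the two-variable recurrence $g(n,k)=g(n-1,k)+\bigl(\alpha(n+k-1)+\beta n\bigr)g(n,k-1)$, which is precisely Theorem 3 reindexed along the diagonal $f(n,k)=\lah{n+k}{n}^{\alpha,\beta}$, together with matching boundary values, and concludes by induction on $n+k$. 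The index bookkeeping you flag as the delicate point does check out: splitting on $i_{k}\leqslant n-1$ versus $i_{k}=n$ corresponds under $N=n+k$, $K=n$ to the terms $\lah{N-1}{K-1}^{\alpha,\beta}$ and $\lah{N-1}{K}^{\alpha,\beta}$ respectively, and the multiplier $(\alpha+\beta)n+\alpha(k-1)=\alpha(N-1)+\beta K$ is the right coefficient. The trade-off: your route is shorter, fully rigorous, and leans on an already-established result, but it gives no combinatorial explanation of why the factors have the form $(\alpha+\beta)i_{j}+\alpha(j-1)$; the paper's route supplies that insight (in keeping with its stated aim of giving combinatorial proofs throughout) at the cost of a more delicate and somewhat informal weight-tracking argument.
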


\begin{proof}
The left hand side $\lah{n+k}{n}^{\alpha ,\beta }$ counts
total weight of distributing a set $1,2,\ldots,n+k$ into $n$ ordered non empty lists.

In the right hand side, we constitute $n$ lists from the elements $1,\ldots
,n$ (one by list having each one a weight $1$). Now, we discuss the weight
of the remaining elements $n+1,\ldots ,n+k$.

To insert the element $n+1$ to a list $i_{1}$ $(1\leq i_{1}\leq n)$, we must
consider all the possible situations of the element already in the list $%
i_{1}$ and we distingue two situations:
\begin{description}
  \item[(A1)] first, the initial element holds in the list $i_{1}$ and we put the
element $n+1$ before the initial one with a weight $\beta $ or after it with
a weight $\alpha $.
  \item[(A2)] second, move the initial element to one of the first $i_{1}-1$ lists with
a weight $\left( \alpha +\beta \right) (i_{1}-1)$ and put the element $n+1$
in the list $i_{1}$ with a weight $1$. Note that, we move the elements only
from right to left to avoid the double counting of situations.
\end{description}

Thus from (A1) and (A2) the weight of the element $n+1$ is $\left( \alpha +\beta \right)
i_{1}$. We sum over all the possible insertions of the element $n+1$, we get
the total weight of the $1,\ldots ,n+1$ elements as $\sum_{1\leq i_{1}\leq
n}\left( \alpha +\beta \right) i_{1}$.

Now, to insert the element $n+2$, we consider the elements of the lists $1,\ldots
,i_{1}$ as fixed ones due to the insertion of the previous element $n+1$
where we consider all the situations. We have two possibilities :
\begin{description}
  \item[(B1)] If we add the element $n+2$ to one of the lists $1,\ldots ,i_{1}$ with
weight $\left( \left( \alpha +\beta \right) i_{1}+\alpha \right) $.

  \item[(B2)] Else, it belongs to a list $i_{2}$ $\left( i_{1}+1\leq i_{2}\leq
n\right) $, with weight $\left( \left( \alpha +\beta \right) i_{2}+\alpha
\right) $ (indeed, the weight of element $n+2$ is $\left( \beta +\alpha \right) $ if it's inserted before or after the initial element of the list $i_{2}$ or $\left(\alpha +\beta \right) (i_{2}-1)+\alpha $ if it's inserted in the list $i_{2}$ and the initial
elements of the list $i_{2}$ is moved to the previous lists).
\end{description}

Than from (B1) and (B2) the weight of the element $n+2$ is
  \begin{equation*}
  \left( \left( \alpha +\beta \right) i_{1}+\alpha \right) +\sum_{i_{2}=i_{1}+1}^{n}\left( \alpha +\beta \right)i_{2}+\alpha= \sum_{i_{2}=i_{1}}^{n}\left( \alpha +\beta \right)i_{2}+\alpha.
  \end{equation*}

Altogether, the weight of the elements $n+1$ and $n+2$ is
\begin{equation*}
\sum_{i_{1}=1}^{n}\left(  \alpha +\beta \right) i_{1}\sum_{i_{2}=i_{1}}^{n}\left( \left( \alpha +\beta \right) i_{2}+\alpha
\right) =\sum_{1\leq i_{1}\leq i_{2}\leq n}\left( \left( \alpha
+\beta \right) i_{1}\right) \left( \left( \alpha +\beta \right) i_{2}+\alpha
\right).
\end{equation*}
We carry on by the same process for the remaining $k-2$ elements. So, for
the last element $n+k$ we consider the elements of the lists $1,\ldots
,i_{k-1}$ as fixed ones, then the weight of the element $n+k$ is  $\left( \alpha +\beta \right)
(i_{k-1}-1)+\alpha (k-1)$ if it's inserted in these lists. Or $\left( \alpha +\beta \right) i_{k}+\alpha (k-1)$
if it's inserted in a list $i_{k}$ $\left( i_{k-1}+1\leq i_{k}\leq n\right) $. This gives the total weight of distributing a set $1,2,\ldots,n+k$ into $n$ ordered non empty lists.
\begin{equation*}
\sum_{1\leq i_{1}\leq i_{2}\leq \cdots \leq i_{k}\leq n}\left( \left( \alpha
+\beta \right) i_{1}\right) \left( \left( \alpha +\beta \right) i_{2}+\alpha
\right) \cdots \left( \left( \alpha +\beta \right) i_{k}+\alpha \left(
k-1\right) \right).
\end{equation*}
\end{proof}

Note that for $\left( \alpha ,\beta \right) =\left( 1,0\right) ,$ $\left(
0,1\right) $ and $\left( 1,1\right) $ we get the identities \cite[eq. 22]%
{MR743795}, \cite[eq. 23]{MR743795} and \cite[eq. 5]{BelBou14} respectively.

\section{Convolution identity}
In this section we proved some combinatorial convolution. The first one is a multinomial convolutional type identity.
\begin{theorem}
The Generalized Stirling numbers satisfy%
\begin{equation*}
\binom{k}{k_{1},\ldots ,k_{p}}\lah{n}{k}^{\alpha ,\beta
}=\sum\limits_{l_{1}+\cdots +l_{p}=n}\binom{n}{l_{1},\ldots ,l_{p}}%
\lah{l_{1}}{k_{1}}^{\alpha ,\beta }\cdots \lah{l_{p}}{k_{p}}^{\alpha ,\beta }.
\end{equation*}
\end{theorem}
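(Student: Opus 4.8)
The plan is to argue combinatorially, in the spirit of the preceding sections, starting from the interpretation of Theorem~1: $\lah{n}{k}^{\alpha ,\beta }=\sum_{\varepsilon \in \Omega _{n,k}}w\left( \varepsilon \right)$ is the total weight of all distributions of $\{1,\ldots ,n\}$ into $k$ ordered non-empty lists. Throughout, $k=k_{1}+\cdots +k_{p}$, and the sum on the right runs over all $(l_{1},\ldots ,l_{p})$ with $l_{1}+\cdots +l_{p}=n$.

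First I would unwind the right-hand side. A typical summand $\binom{n}{l_{1},\ldots ,l_{p}}\prod_{j=1}^{p}\lah{l_{j}}{k_{j}}^{\alpha ,\beta }$ is the total weight of the configurations built by (a) choosing an ordered partition $(B_{1},\ldots ,B_{p})$ of $\{1,\ldots ,n\}$ with $|B_{j}|=l_{j}$, and (b) for each $j$, choosing a distribution $\varepsilon _{j}$ of the block $B_{j}$ into $k_{j}$ ordered non-empty lists, the configuration carrying weight $\prod_{j}w(\varepsilon _{j})$. Here one uses that the total weight of the analogue of $\Omega _{m,k}$ over any $m$-element ground set equals that over $\{1,\ldots ,m\}$, since an order isomorphism of ground sets preserves the insertion process and hence every element weight; so step (b) contributes the factor $\lah{l_{j}}{k_{j}}^{\alpha ,\beta }$, and summing over all size vectors reproduces the right-hand side.

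Next I would describe the merging bijection. From such a configuration, collect all the lists occurring in $\varepsilon _{1},\ldots ,\varepsilon _{p}$; since the $B_{j}$ are disjoint and every list is non-empty, these form a distribution $\varepsilon \in \Omega _{n,k}$ of $\{1,\ldots ,n\}$ into $k$ ordered non-empty lists, together with the extra bookkeeping of which of these $k$ lists came from which block, i.e.\ an ordered partition $(G_{1},\ldots ,G_{p})$ of the $k$-element set of lists of $\varepsilon$ with $|G_{j}|=k_{j}$. This assignment is a bijection onto the set of pairs $(\varepsilon ,(G_{1},\ldots ,G_{p}))$: from such a pair one recovers $B_{j}=\bigcup_{\ell \in G_{j}}\ell$ and $\varepsilon _{j}=G_{j}$. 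It is weight-preserving, because the weight of an element in $\varepsilon$ ($\beta$ at a list head, $\alpha$ in the interior, $1$ for the first element inserted in its list) depends only on the list containing that element, whence $w(\varepsilon )=\prod_{\ell }w(\ell )=\prod_{j}\prod_{\ell \in G_{j}}w(\ell )=\prod_{j}w(\varepsilon _{j})$.

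Finally, for a fixed $\varepsilon \in \Omega _{n,k}$ the $k$ lists are pairwise distinct (non-empty and disjoint), so the number of ordered partitions $(G_{1},\ldots ,G_{p})$ of them with $|G_{j}|=k_{j}$ is exactly $\binom{k}{k_{1},\ldots ,k_{p}}$. Summing the weight-preserving bijection over all $\varepsilon$ then yields
\begin{equation*}
\sum_{l_{1}+\cdots +l_{p}=n}\binom{n}{l_{1},\ldots ,l_{p}}\lah{l_{1}}{k_{1}}^{\alpha ,\beta }\cdots \lah{l_{p}}{k_{p}}^{\alpha ,\beta }=\binom{k}{k_{1},\ldots ,k_{p}}\sum_{\varepsilon \in \Omega _{n,k}}w(\varepsilon )=\binom{k}{k_{1},\ldots ,k_{p}}\lah{n}{k}^{\alpha ,\beta },
\end{equation*}
which is the assertion. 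The one genuinely non-formal point, and the step I would be most careful about, is the weight-preservation: one must isolate that each element weight is local to its list and invariant under order-preserving relabelings of the ground set; granting that, the rest is the standard merge-and-relabel bijection underlying exponential-formula type convolutions. A less self-contained alternative would be to first derive from \eqref{h} the exponential generating function identity $\sum_{n\geq 0}\lah{n}{k}^{\alpha ,\beta }x^{n}/n!=\tfrac{1}{k!}\bigl(\sum_{n\geq 1}\lah{n}{1}^{\alpha ,\beta }x^{n}/n!\bigr)^{k}$ and then extract the coefficient of $x^{n}/n!$ in a product of $p$ such series.
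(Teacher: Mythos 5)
Your proof is correct and is essentially the paper's argument: the paper also double-counts pairs (distribution into $k$ non-empty lists, assignment of those lists into groups of sizes $k_{1},\ldots ,k_{p}$ — phrased there as coloring the lists with $p$ colors), getting $\binom{k}{k_{1},\ldots ,k_{p}}\lah{n}{k}^{\alpha ,\beta }$ one way and the multinomial convolution the other. Your write-up merely makes explicit two points the paper leaves implicit, namely that the weight factors over lists and is invariant under order-preserving relabeling of the ground set.
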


\begin{proof}
We consider the weight of the partitions of the set $\{1,\ldots ,n\}$ into $%
k $ lists which is $\lah{n}{k}^{\alpha ,\beta }$. We
color the elements of the lists with $p$ colors such that the elements of
each $k_{i}$ $\left( 1\leq i\leq p\right) $\ lists have the same color,
there are $\binom{k}{k_{1},\ldots ,k_{p}}$ possibilities to do. This is
equivalent to choose the elements of same color then count the weight of
there distribution into lists. So we choose each $l_{i}$ elements that have
the same color and we have $\binom{n}{l_{1},\ldots ,l_{p}}$ possibilities,
then consider the weight of all the distributions of the $l_{i}$ elements
into $k_{i}$ lists and we have $\lah{l_{i}}{k_{i}}^{\alpha ,\beta }$. Summing over all possible values of $%
l_{i} $ gives the result.
\end{proof}

\begin{theorem}
The Generalized Stirling numbers satisfy%
\begin{equation*}
\lah{k+m}{k}^{\alpha ,\beta }=\sum_{j=0}^{s}
\left(\sum_{k-j\leq i_{1}\leq \cdots \leq i_{s-j}\leq k}\prod_{l=0}^{s-j-1} \left( \alpha
+\beta \right) i_{l+1}+\alpha \left( m-\left( s-j-l\right) \right) \right)
\lah{k+m-s}{k-j}^{\alpha ,\beta }.
\end{equation*}
\end{theorem}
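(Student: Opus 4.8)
The plan is to prove the identity by induction on $s$, using only the triangular recurrence of Theorem~3. For $s=0$ the right-hand side reduces to the single term $\lah{k+m}{k}^{\alpha,\beta}$ (the coefficient being an empty product), and for $s=1$ the identity is exactly Theorem~3 applied to $\lah{k+m}{k}^{\alpha,\beta}$, after the algebraic check $(\alpha+\beta)k+\alpha(m-1)=\alpha(k+m-1)+\beta k$. Abbreviate the bracketed coefficients as
\[
C_{j}^{(s)}=\sum_{k-j\le i_{1}\le\cdots\le i_{s-j}\le k}\ \prod_{l=0}^{s-j-1}\Bigl((\alpha+\beta)i_{l+1}+\alpha\bigl(m-(s-j-l)\bigr)\Bigr),
\]
so that the statement to be proved is $\lah{k+m}{k}^{\alpha,\beta}=\sum_{j=0}^{s}C_{j}^{(s)}\,\lah{k+m-s}{k-j}^{\alpha,\beta}$.

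Assuming the identity for a given $s$, I would apply Theorem~3 with $n=k+m-s$ to each factor on the right,
\[
\lah{k+m-s}{k-j}^{\alpha,\beta}=\lah{k+m-s-1}{k-j-1}^{\alpha,\beta}+\bigl(\alpha(k+m-s-1)+\beta(k-j)\bigr)\lah{k+m-s-1}{k-j}^{\alpha,\beta},
\]
and then regroup the result by its lower index. Collecting, the coefficient of $\lah{k+m-(s+1)}{k-j'}^{\alpha,\beta}$ is $C_{j'-1}^{(s)}+\bigl(\alpha(k+m-s-1)+\beta(k-j')\bigr)C_{j'}^{(s)}$, under the boundary conventions $C_{-1}^{(s)}=C_{s+1}^{(s)}=0$. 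Hence the induction closes as soon as one checks the coefficient recurrence $C_{j'}^{(s+1)}=C_{j'-1}^{(s)}+\bigl(\alpha(k+m-s-1)+\beta(k-j')\bigr)C_{j'}^{(s)}$.

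This last recurrence is the only genuine computation. I would prove it by splitting the outermost summation variable $i_{1}$ in $C_{j'}^{(s+1)}$ into the cases $i_{1}=k-j'$ and $i_{1}\ge k-j'+1$. In the first case the $l=0$ factor of the product equals $(\alpha+\beta)(k-j')+\alpha\bigl(m-(s+1-j')\bigr)$, which simplifies precisely to $\alpha(k+m-s-1)+\beta(k-j')$, and, after the shifts $i_{l+1}\mapsto i_{l}$ and $l\mapsto l+1$, the surviving sum is exactly $C_{j'}^{(s)}$; in the second case all indices lie in $[k-j'+1,k]$ and, because $s+1-j'-l=s-(j'-1)-l$, the remaining sum coincides term by term with $C_{j'-1}^{(s)}$. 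Adding the two pieces yields the claimed recurrence. The main obstacle is thus purely notational---keeping the window $[k-j,k]$ of summation and the argument $m-(s-j-l)$ aligned through the two reindexings without an off-by-one slip; no idea beyond Theorem~3 is required. A parallel combinatorial argument is also possible: record the configuration of the first $k+m-s$ of the elements $1,2,\ldots,k+m$, which occupies $k-j$ non-empty lists for some $0\le j\le s$ and hence accounts for $\lah{k+m-s}{k-j}^{\alpha,\beta}$, then insert the remaining $s$ elements just as in the proof of the symmetric-function expansion---$j$ of them opening the $j$ new lists with weight $1$, the other $s-j$ contributing the weights summed by $C_{j}^{(s)}$---but this version needs the insertion-weight analysis redone over an arbitrary starting configuration, so the inductive proof above is cleaner.
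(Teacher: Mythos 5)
Your proof is correct, but it takes a genuinely different route from the paper. The paper argues combinatorially, in the spirit of its Theorem 6: it splits the distributions of $\{1,\ldots,k+m\}$ according to which $j$ of the last $s$ elements open new lists, charges $\lah{k+m-s}{k-j}^{\alpha,\beta}$ for the configuration of the first $k+m-s$ elements, and then computes the insertion weights of the remaining $s-j$ elements by the same ``fix the lists $1,\ldots,i_{l}$ and sum over where the next element lands'' device used for the symmetric-function expansion; the nested sum $\sum_{k-j\le i_1\le\cdots\le i_{s-j}\le k}$ arises directly as the record of those insertions. You instead induct on $s$ using only the triangular recurrence of Theorem 3, reducing everything to the coefficient identity $C_{j'}^{(s+1)}=C_{j'-1}^{(s)}+\bigl(\alpha(k+m-s-1)+\beta(k-j')\bigr)C_{j'}^{(s)}$, which you verify by splitting on whether $i_1=k-j'$ or $i_1\ge k-j'+1$; I checked both the algebraic simplification $(\alpha+\beta)(k-j')+\alpha(m-(s+1-j'))=\alpha(k+m-s-1)+\beta(k-j')$ and the two reindexings, and they work, with the boundary cases $j'=0$ and $j'=s+1$ covered by your conventions $C_{-1}^{(s)}=C_{s+1}^{(s)}=0$ together with $C_{s}^{(s)}=1$. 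What each approach buys: the paper's proof explains \emph{why} the coefficients have the form of shifted elementary-symmetric-type sums over the window $[k-j,k]$, consistent with the combinatorial program of the whole paper, but it leans on an informal ``carry on by the same process'' step over an arbitrary starting configuration; your induction is self-contained, fully checkable, and valid for arbitrary parameters $\alpha,\beta$ without invoking the weight interpretation at all, at the cost of making the formula look like an unmotivated verification. Your closing remark correctly identifies the combinatorial alternative as essentially the paper's argument.
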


\begin{proof}
Let us consider the $s$ $(0\leq s \leq k)$ last elements of the set $\{1,2,\ldots ,k,\ldots ,k+m\}$, we
constitute the last $j$ lists ($0\leq j\leq s$) from these elements. Note that,
the weight of distributed the first $k+m-s$ elements into $k-j$ lists is $%
\lah{k+m-s}{k-j}^{\alpha ,\beta }$. Now, to constitute
the $j$ remaining lists, we pick $j$ elements from the $s$ ones all with weight $%
1 $ and discuss the insertion of the subset of the remaining $s-j$ elements to the $k$ lists. To
insert the first element there are two cases:

1) if we add it in the one of the first $k-j$ lists, then it has the weight $\left( \alpha +\beta \right) \left( k-j\right) +\alpha
\left( m-\left( s-j\right) \right) $ (In fact, $\beta \left( k-j\right) $ as head
list or $\alpha \left( k-j\right) +\alpha m-\left( s-j\right) $ after each elements).

2) Else, we add it to a list $i_{1}$ $(k-j+1\leq i_{1}\leq k)$, we have to
discuss two other cases:

a) the initial element of the list $i_{1}$ stay in the list $i_{1}$ so the
weight to insert the element  $\left( \alpha +\beta \right)$.

b) the initial element moves to one of the previous $i_{1}-1$ lists with
weight $\left( \alpha +\beta \right) (i_{1}-1)+\alpha \left( m-\left(
s-j\right) \right) $.

Thus, we sum over all
the possible insertions in the list $i_{1}$, we get the weight
 $\sum_{k-j+1\leq i_{1}\leq
k}\left( \alpha +\beta \right) i_{1}+\alpha \left( m-\left( s-j\right)
\right) $.

From 1) and 2) we get the weight of the first element
 \begin{equation*}
 \sum_{k-j\leq i_{1}\leq k}\left( \alpha +\beta \right) i_{1}+\alpha
\left( m-\left( s-j\right) \right) .
 \end{equation*}
To insert the second element of the subset, we consider the elements of the lists $%
1,\ldots ,i_{1}$ as fixed ones due to the insertion of the previous element
where we consider all the situations. We have two situations :

a') if we add it to the lists $1,\ldots ,i_{1}$ with weight $\left(
\alpha +\beta \right) i_{1}+\alpha \left( m-\left( s-j\right) +1\right) $.

b') else, it belongs to a list $i_{2}$ $\left( i_{1}+1\leq i_{2}\leq
k\right) $, with weight $\left( \alpha +\beta \right) i_{2}+\alpha \left(
m-\left( s-j\right) +1\right) $, that gives $\sum_{i_{1}+1\leq i_{2}\leq
n}\left( \alpha +\beta \right) i_{2}+\alpha \left( m-\left( s-j\right)
+1\right) $.

Thus, from a') and b') we get
\begin{equation*}
\sum_{k-j\leq i_{1}\leq i_{2}\leq n}\left(
\left( \alpha +\beta \right) i_{1}+\alpha \left( m-\left( s-j\right) \right)
\right) \left( \left( \alpha +\beta \right) i_{2}+\alpha \left( m-\left(
s-j\right) +1\right) \right),
\end{equation*}
by the same way we carry on for the remaining $s-j-2$ elements of the subset, which gives the total weight
\begin{equation*}
\sum_{k-j\leq i_{1}\leq i_{2}\leq \cdots \leq i_{s-j}\leq
k}\left( \left( \alpha +\beta \right) i_{1}+\alpha \left( m-\left(
s-j\right) \right) \right) \cdots \left( \left( \alpha +\beta \right)
i_{s-j}+\alpha \left( m-1\right) \right),
\end{equation*}
then by summing over all possible values of $j$ we get the result.
\end{proof}

\bibliographystyle{abbrv}
\bibliography{BD_Art}

\end{document}